\newtheorem{theorem}{Theorem}[section]
\newtheorem{corollary}[theorem]{Corollary}
\theoremstyle{definition}
\newtheorem{example}[theorem]{Example}
\theoremstyle{remark}
\numberwithin{equation}{section}
\newcommand{\ba}{\begin{array}}
\newcommand{\ea}{\end{array}}
\begin{document}
\date{}
\title{\bf\Large{From Sylvester's determinant identity to Cramer's rule}\footnote{Partially supported by a grant from China Scholarship Council and National Natural Science Foundation of China (11101071, 11271001, 51175443).}}
\author{Hou-biao Li\textsuperscript{1}\footnote{Corresponding Author, Email: lihoubiao0189@163.com}, Ting-Zhu Huang\textsuperscript{1}, Tong-xiang Gu\textsuperscript{2}, Xing-Ping Liu\textsuperscript{2}
 \\
{\small \textsuperscript{1} School of Mathematical Sciences, University of Electronic Science}\\
{\small and Technology of China, Chengdu, 611731, P.R. China.}\\
{\small \textsuperscript{2} Lab of Comp. Phy., Institute of Applied Physics and Computational Mathematics,}\\
\ \ {\small Beijing, 100088, P. R. China.}}

\maketitle

\begin{abstract}
{The object of this paper is to introduce a new and fascinating method of solving large
linear equations, based on Cramer's rule or Gaussian elimination but employing Sylvester's determinant identity in its computation process. In addition, a scheme
suitable for parallel computing is presented for this kind of generalized Chi\`{o}'s determinant condensation processes,
which makes this new method have a property of natural parallelism. Finally, some numerical
experiments also confirm our theoretical analysis.}

 \noindent{\emph{Keywords}}: Sylvester's determinant identity; Cramer's rule; Chi\`{o}'s method; Parallel process.
\end{abstract}

\section {Introduction}
As is well-known, how to solve effectively linear systems
is a very important problem in scientific and engineering fields. Many of linear solvers have been researched such as Gaussian elimination \cite{Gaussian,Saad}, relaxation methods \cite{Young71}, row-action iteration schemes
\cite{Row92,Householder60} and (block) Krylov subspace \cite{Bloch2011,Saad}.

Recently, a low communication condensation-based linear system solver utilizing Cramer's Rule is presented in \cite{ken2012}.
As the authors stated that unique combination between Cramer's rule and matrix condensation techniques yields an elegant parallel computing architectures, by constructing a binary, tree-based data flow in which the algorithm mirrors the matrix at critical points during the condensation process.
Moreover, the accuracy and computational complexity of the proposed algorithm are similar to LU-decomposition \cite{Gaussian}.

In this paper, we will continue research this kind of parallel algorithms and give some theoretical analysis and a generalized Chi\`{o}'s determinant condensation process, which
perfect the corresponding conclusions.

This paper is organized as follows. In Second 2, we will review some more general determinant condensation algorithms---Sylvester's determinant identity,
and then give theoretical basis on the above parallel computing architectures \cite{ken2012}, which shows the negation in mirroring process is not
necessary to arrive at the correct answer. Moreover, a more general scheme utilizing Cramer's Rule and matrix condensation techniques is also given.
In addition, the scheme suitable for parallel computing on the sylvester's identity is proposed in Section 3. Finally, a simple example is used to illustrate this new algorithm in Section 4.


\section{Sylvester's determinant condensation algorithms}

Throughout this section, we mainly consider an $n\times n$ matrix $A=(a_{ij})$ $(i, j = 1,2,\ldots, n)$ with elements $a_{ij}$ and
determinant $|A|$, also written $\det A$. Recently, a Chi\`{o} condensation method \cite{1853} is applied to solve large linear systems in \cite{ken2012}.
In fact, the prototype of this method may be traced back to the following Sylvester's determinant identity for
calculating a determinant of arbitrary order in 1851.

\begin{theorem}\label{Th2.1}(Sylvester's identity,\cite{MR2419937,LAA2014,MCS1996,AM2005}).
Let $A=(a_{ij})$ be an $n\times n$ matrix over a commutative ring. For a submatrix $A_0=(a_{ij})$, $i,j=1,\ldots k$ of $A$, set
\begin{equation}\label{2.1}
{\hat c_{pq}} = \det \left[ {\begin{array}{*{20}{c}}
{}&{}&{}&{{a_{1q}}}\\
{}&{{A_0}}&{}& \vdots \\
{}&{}&{}&{{a_{kq}}}\\
{{a_{p1}}}& \cdots &{{a_{pk}}}&{{a_{pq}}}
\end{array}} \right].
\end{equation}
Let $\hat C = ({{\hat c}_{pq}})$, $p, q=k+1,\ldots,n$. Then
$$
{{(\det {A_0})}^{n - k - 1}}\det A = {\det \hat C}.
$$
Specially when $A_0$ is an invertible matrix, we have that
\begin{equation}\label{2.2}
\det A = \frac{{\det \hat C}}{{{{(\det {A_0})}^{n - k - 1}}}}.
\end{equation}
\end{theorem}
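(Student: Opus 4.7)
The plan is to reduce Sylvester's identity to the Schur complement formula in the invertible case, and then extend to an arbitrary commutative ring by a polynomial identity argument.

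First I would partition $A$ conformally with $A_0$ as
$$
A = \begin{pmatrix} A_0 & B \\ C & D \end{pmatrix},
$$
where $B$ is $k\times(n-k)$, $C$ is $(n-k)\times k$, and $D$ is $(n-k)\times(n-k)$. Assuming for the moment that $A_0$ is invertible, the standard block LU factorization gives the Schur complement formula
$$
\det A \;=\; \det(A_0)\,\det\bigl(D - C A_0^{-1} B\bigr).
$$
The next step is to recognize that each entry $\hat c_{pq}$ is itself the determinant of a $(k+1)\times(k+1)$ bordered matrix of exactly the same block shape, whose top-left block is $A_0$, whose right border is the $q$-th column of $B$, whose bottom border is the $p$-th row of $C$, and whose $(k+1,k+1)$ entry is $a_{pq}$. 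Applying the Schur complement formula to this bordered matrix gives
$$
\hat c_{pq} \;=\; \det(A_0)\,\bigl(a_{pq} - (\text{row } p \text{ of } C)\,A_0^{-1}\,(\text{column } q \text{ of } B)\bigr)
\;=\; \det(A_0)\,\bigl(D - C A_0^{-1} B\bigr)_{pq}.
$$

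From here the argument is essentially a factor-count. The above shows $\hat C = \det(A_0)\cdot(D - C A_0^{-1} B)$ as $(n-k)\times(n-k)$ matrices, so pulling out the scalar yields
$$
\det \hat C \;=\; \det(A_0)^{\,n-k}\,\det\bigl(D - C A_0^{-1} B\bigr) \;=\; \det(A_0)^{\,n-k-1}\,\det A,
$$
where the last equality uses the Schur complement formula for $\det A$. This is exactly the claimed identity, and the ``specially when'' form \eqref{2.2} follows by dividing through.

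The main obstacle is that the theorem is stated over an arbitrary commutative ring, where $A_0$ need not be invertible and $A_0^{-1}$ need not exist. The cleanest fix is a universality argument: view the $n^2$ entries $a_{ij}$ as independent indeterminates over $\mathbb Z$, so that the identity to be proved becomes an equality of two polynomials in $\mathbb Z[a_{ij}]$. Over the fraction field $\mathbb Q(a_{ij})$ the matrix $A_0$ is invertible, so the first part of the argument applies and the polynomial identity holds there; being an identity in $\mathbb Z[a_{ij}]$, it then specializes to any commutative ring by ring homomorphism. Alternatively, one could avoid inverses by replacing $A_0^{-1}$ with $\operatorname{adj}(A_0)/\det(A_0)$ throughout and clearing denominators, which leads to the same polynomial identity. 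Either route sidesteps the invertibility hypothesis and completes the proof.
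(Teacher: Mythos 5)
The paper does not actually prove Theorem \ref{Th2.1}: it is quoted from the literature with citations only, so there is no in-paper argument to compare against. Your proof is correct and is one of the standard routes to Sylvester's identity. The two key steps both check out: the Schur complement formula applied entrywise to the bordered $(k+1)\times(k+1)$ matrices gives $\hat C=\det(A_0)\,\bigl(D-CA_0^{-1}B\bigr)$, and extracting the scalar from each of the $n-k$ rows yields $\det\hat C=\det(A_0)^{\,n-k}\det\bigl(D-CA_0^{-1}B\bigr)=\det(A_0)^{\,n-k-1}\det A$. The universality step is also handled properly and is genuinely needed, since the theorem is asserted over an arbitrary commutative ring where $A_0^{-1}$ may not exist: both sides of the identity are polynomials in $\mathbb{Z}[a_{ij}]$ for indeterminate entries, they agree in the fraction field $\mathbb{Q}(a_{ij})$ where the generic $A_0$ is invertible, hence they agree as polynomials and specialize to any commutative ring. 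This is essentially one of the proofs surveyed in the cited reference \cite{MCS1996}.
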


\begin{corollary}\label{Co2.2}(Chi\`{o}'s method, \cite{F75,ken2012}).
For an $n\times n$ matrix $A=(a_{ij})$ with $a_{nn}\neq 0$, let $E =(e_{ij})$ be the $(n-1)\times(n-1)$ matrix defined by
\begin{equation}\label{2.3}
{e_{ij}} = \left| {\begin{array}{*{20}{c}}
{{a_{ij}}}&{{a_{in}}}\\
{{a_{nj}}}&{{a_{nn}}}
\end{array}} \right| = {a_{ij}}{a_{nn}} - {a_{in}}{a_{nj}},{\kern 1pt} {\kern 1pt} {\kern 1pt} {\kern 1pt} i,j = 1, \ldots ,n - 1.
\end{equation}
Then
\begin{equation}\label{2.4}
\det A = \frac{1}{{a_{nn}^{n - 2}}}\det E.
\end{equation}
\end{corollary}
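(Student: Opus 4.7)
The plan is to derive the corollary as the special case of Sylvester's identity (Theorem~\ref{Th2.1}) in which $k=1$ and the pivot submatrix is the single entry $a_{nn}$. The formal obstruction is that Theorem~\ref{Th2.1} is stated with $A_0$ as the leading $k\times k$ block, whereas here the distinguished entry $a_{nn}$ sits in the bottom-right corner. I would handle this by a preliminary relabelling, apply \eqref{2.2}, and then match symbols.

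First, I would introduce the permuted matrix $A'$ obtained from $A$ by cyclically moving the last row to the top (rows $1,\ldots,n-1$ shifting down by one) and performing the analogous cyclic shift on the columns. Each of these cyclic permutations on $n$ indices is a product of $n-1$ transpositions and therefore carries sign $(-1)^{n-1}$; applying both yields a net factor $(-1)^{2(n-1)}=1$, so $\det A'=\det A$. In the new indexing we have $a'_{11}=a_{nn}$, $a'_{1q}=a_{n,q-1}$, $a'_{p1}=a_{p-1,n}$, and $a'_{pq}=a_{p-1,q-1}$ for $p,q\ge 2$.

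Next, I would apply Theorem~\ref{Th2.1} to $A'$ with $k=1$ and $A_0=(a_{nn})$. Definition \eqref{2.1} specialises to the $2\times 2$ determinant
\begin{equation*}
\hat c_{pq}=\det\begin{pmatrix} a'_{11} & a'_{1q}\\ a'_{p1} & a'_{pq}\end{pmatrix}
=a_{nn}\,a_{p-1,q-1}-a_{n,q-1}\,a_{p-1,n},\qquad p,q=2,\ldots,n.
\end{equation*}
Reindexing by $i=p-1$, $j=q-1$, this is exactly $e_{ij}$ in \eqref{2.3}, so the condensed matrix $\hat C$ produced by Theorem~\ref{Th2.1} coincides with $E$ up to this harmless relabelling, and in particular $\det\hat C=\det E$.

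Finally, inserting $k=1$ and $\det A_0=a_{nn}\ne 0$ into identity \eqref{2.2} gives
\begin{equation*}
\det A=\det A'=\frac{\det\hat C}{(\det A_0)^{\,n-1-1}}=\frac{\det E}{a_{nn}^{n-2}},
\end{equation*}
which is precisely \eqref{2.4}. There is no real difficulty here beyond the bookkeeping just described; the only point that requires a moment of care is verifying that the two cyclic row/column shifts contribute the same sign and therefore cancel, so that no extraneous $(-1)$ appears in front of $\det E$.
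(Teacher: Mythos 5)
Your proof is correct and is exactly the derivation the paper intends by labelling this a corollary of Theorem~\ref{Th2.1} (the paper itself gives no proof, only citations): specialize to $k=1$ after relocating the pivot $a_{nn}$ to the leading position. Your careful handling of the row/column cyclic shifts and the cancellation of the two $(-1)^{n-1}$ signs is the only nontrivial bookkeeping, and it checks out.
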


Obviously, the above Theorem \ref{Th2.1} reduces a matrix of order $n$ to order $n-k$ to evaluate its determinant. Repeating the
procedure numerous times can reduce a large matrix to a small one, which is convenient for the calculation. This process is called by condensation method \cite{1853,F75}.
As an example of Chi\`{o}'s condensation, the paper \cite{ken2012} considers the following $3\times 3$ matrix:
\[A = \left| {\begin{array}{*{20}{c}}
{{a_{11}}}&{{a_{12}}}&{{a_{13}}}\\
{{a_{21}}}&{{a_{22}}}&{{a_{23}}}\\
{{a_{31}}}&{{a_{32}}}&{{a_{33}}}
\end{array}} \right|{\kern 1pt} {\kern 1pt} {\kern 1pt} {\rm{and}}{\kern 1pt} {\kern 1pt} {\rm{its}}{\kern 1pt} {\kern 1pt} {\rm{condensed}}{\kern 1pt} {\kern 1pt} {\rm{form}}{\kern 1pt} {\kern 1pt} \left| {\begin{array}{*{20}{c}}
\times&\times&\times\\
\times&{{a_{11}}{a_{22}} - {a_{21}}{a_{12}}}&{{a_{11}}{a_{23}} - {a_{21}}{a_{13}}}\\
\times&{{a_{11}}{a_{32}} - {a_{31}}{a_{12}}}&{{a_{11}}{a_{33}} - {a_{31}}{a_{13}}}
\end{array}} \right|.\]

In fact, the above condensation processes are not only used to evaluate determinants but also can be used to solve linear systems.
For example, one can derive the following equivalence relation on the solution formula of linear systems.

\begin{theorem}\label{Th2.2}(Equivalence relation). The linear system in the form
$Ax=b$ (where $A=(a_{ij})$ is an $n\times n$ invertible coefficient matrix) has the same corresponding solution as the linear system $\hat C{x^{(k)}} = {b^{(k)}}$, where
${\hat C}$ is defined as in Theorem \ref{Th2.1}, ${x^{(k)}} = {[{x_{k + 1}},{x_{k + 2}}, \ldots ,{x_n}]^T}$ and
${b^{(k)}} = {[b_{k + 1}^{'},b_{k + 2}^{'}, \ldots,b_n^{'}]^T}$. Here
\[b_j^{'} = \det \left[ {\begin{array}{*{20}{c}}
{}&{}&{}&{{b_1}}\\
{}&{{A_0}}&{}& \vdots \\
{}&{}&{}&{{b_k}}\\
{{a_{j1}}}& \cdots &{{a_{jk}}}&{{b_j}}
\end{array}} \right],j = k + 1, \ldots ,n.\]
\end{theorem}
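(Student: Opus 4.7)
The plan is to recognize the Sylvester condensation as a determinant-weighted form of block Gaussian elimination, so that the asserted equivalence reduces to the classical elimination-by-Schur-complement argument. I assume throughout that $A_0$ is invertible, which is the setting of \eqref{2.2}; otherwise the identities below hold only in a polynomial sense and ``solution equivalence'' becomes vacuous.

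First I would partition $A$, $x$, and $b$ conformally with the index sets $\{1,\ldots,k\}$ and $\{k+1,\ldots,n\}$:
\[
A=\begin{pmatrix} A_0 & B \\ C & D \end{pmatrix}, \qquad x=\begin{pmatrix} x_I \\ x^{(k)} \end{pmatrix}, \qquad b=\begin{pmatrix} b_I \\ b_{II} \end{pmatrix},
\]
and then eliminate $x_I$ from the first block-row of $Ax=b$ to produce the Schur complement system
\[
(D-CA_0^{-1}B)\,x^{(k)} \;=\; b_{II}-CA_0^{-1}b_I,
\]
whose solution coincides with the last $n-k$ coordinates of the solution of $Ax=b$, and whose coefficient matrix is invertible since $\det(D-CA_0^{-1}B)=\det A/\det A_0\neq 0$.

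The crux is then to identify $\hat C$ and $b^{(k)}$ as $\det A_0$ times these Schur quantities. Writing the $(k+1)\times(k+1)$ bordered matrix in \eqref{2.1} as the block matrix with corners $A_0$, the column $\beta_q=(a_{1q},\ldots,a_{kq})^T$, the row $\gamma_p^T=(a_{p1},\ldots,a_{pk})$, and the scalar $a_{pq}$, a direct block-determinant computation will give
\[
\hat c_{pq}=(\det A_0)\bigl(a_{pq}-\gamma_p^T A_0^{-1}\beta_q\bigr),
\]
and the parallel computation with $b_I,\,b_j$ in place of $\beta_q,\,a_{pq}$ will yield $b'_j=(\det A_0)\bigl(b_j-\gamma_j^T A_0^{-1} b_I\bigr)$. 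Assembled over $p,q=k+1,\ldots,n$ these read
\[
\hat C=(\det A_0)(D-CA_0^{-1}B), \qquad b^{(k)}=(\det A_0)(b_{II}-CA_0^{-1}b_I),
\]
so $\hat C x^{(k)}=b^{(k)}$ is precisely the Schur complement system rescaled by the nonzero scalar $\det A_0$, and the two systems are therefore equivalent.

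The only genuinely technical step is the block-determinant identity for $\hat c_{pq}$. I would prove it by right-multiplying the bordered matrix by the unit block-triangular factor $\bigl(\begin{smallmatrix} I_k & -A_0^{-1}\beta_q \\ 0 & 1 \end{smallmatrix}\bigr)$, which reduces it to upper block-triangular form whose determinant reads off as $(\det A_0)(a_{pq}-\gamma_p^T A_0^{-1}\beta_q)$; a quicker alternative is to apply \eqref{2.2} directly to this $(k+1)\times(k+1)$ matrix with $A_0$ as its upper-left block. Apart from this small computation, everything else is bookkeeping, so I do not foresee any serious obstacle.
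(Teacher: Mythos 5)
Your proof is correct, but it takes a genuinely different route from the paper. The paper's proof is a two-line application of Cramer's rule: by Sylvester's identity, $\det\hat C=(\det A_0)^{n-k-1}\det A$, and applying the same identity to each column-replaced matrix $A_j(b)$ (whose leading $k\times k$ block is still $A_0$ since $j>k$) shows that its condensation is exactly $\hat C_j(b^{(k)})$, so the factor $(\det A_0)^{n-k-1}$ cancels in the Cramer quotient and $x_j=\det(\hat C_j(b^{(k)}))/\det\hat C$. You instead bypass Cramer's rule entirely: you identify each bordered determinant via the Schur-complement formula $\hat c_{pq}=(\det A_0)(a_{pq}-\gamma_p^TA_0^{-1}\beta_q)$, conclude that $\hat C x^{(k)}=b^{(k)}$ is literally $\det A_0$ times the block-eliminated system $(D-CA_0^{-1}B)x^{(k)}=b_{II}-CA_0^{-1}b_I$, and read off the equivalence. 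The paper's argument is shorter and stays entirely within the determinant-identity framework that the rest of the paper is built on; yours is more structural, explains \emph{why} the condensation preserves solutions (it is block Gaussian elimination in disguise), and as a by-product re-derives the $k=\,$anything case of Sylvester's identity from the block-triangular factorization. One point in your favor: you explicitly flag the hypothesis $\det A_0\neq 0$, which the theorem statement omits but which both proofs require (the paper only acknowledges it in a remark afterwards, and its cancellation of $(\det A_0)^{n-k-1}$ from numerator and denominator silently uses it). Your argument is complete once the routine block-determinant computation is written out.
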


\begin{proof} According to Theorem \ref{Th2.1} or Eq. \eqref{2.2}, we know that there exists a constant ${{(\det {A_0})}^{n - k - 1}}$ between the determinant of $A$ and the determinant of $\hat C$,
which is only dependent on the given submatrix $A_0$. Therefore, for any given submatrix $A_0$, there also exists the same constant ${{(\det {A_0})}^{n - k - 1}}$ between the determinant of
$A_j(b)$ ($j = k + 1, \ldots ,n$), the matrix $A$ with its $jth$ column replaced
by $b$, and the determinant of $\hat C_j(b^{(k)})$. Thus, by Cramer's rule, we have that
\[{x_j} = \frac{{\det ({A_j}(b))}}{{\det A}} = \frac{{det{{({A_0})}^{n - k - 1}}\det ({{\hat C}_j}({b^{(k)}}))}}{{det{{({A_0})}^{n - k - 1}}\det \hat C}} = \frac{{\det ({{\hat C}_j}({b^{(k)}}))}}{{\det \hat C}},j = k + 1, \ldots ,n.\]
The conclusion holds.
\end{proof}

Obviously, when the submatrix $A_0$ is singular, the solution of linear systems cannot be evaluated by this method. Since interchanging the $r$th and $n$th rows
and the $s$th and $n$th columns of linear systems has only an effect on the order of the unknowns $x_i$, which has no effect on the whole solution $x$.
Therefore, we may obtain the following more general conclusion.

For convenience, we firstly define the ordered index list $N_n=(1,2,\ldots,n)$ for any positive integer $n$.
For two ordered index (i.e., for any $\alpha < \beta, i_{\alpha}<i_{\beta}$) lists $I=(i_1,\ldots,i_t)\subset N_n$ and $J=(j_1,\ldots,j_t)\subset N_n$, we denote the corresponding
complementary ordered index lists by $I'$ and $J'$, respectively. That is, $I\bigcup I'=J\bigcup J'=N_n$.

\begin{corollary}\label{Th3.4}.
Let $A=(a_{ij})$ be an $n\times n$ matrix and $k$ be a fixed integer $0\leq k \leq n-1$.
$I=(i_1,\ldots,i_k)\subset N_n$ and $J=(j_1,\ldots,j_k)\subset N_n$ are two ordered index lists. We denote the corresponding
submatrix, extracted from $A$, as
\[A\left[ {\begin{array}{*{20}{c}}
I\\
J
\end{array}} \right] = A\left[ {\begin{array}{*{20}{c}}
{{i_1}, \ldots ,{i_k}}\\
{{j_1}, \ldots ,{j_k}}
\end{array}} \right] \triangleq \left[ {\begin{array}{*{20}{c}}
{{a_{{i_1}{j_1}}}}& \cdots &{{a_{{i_1}{j_k}}}}\\
 \vdots & \ddots & \vdots \\
{{a_{{i_k}{j_1}}}}& \cdots &{{a_{{i_k}{j_k}}}}
\end{array}} \right].\]
Suppose that the invertible submatrix $A_0=A\left[ {\begin{array}{*{20}{c}}
I\\
J
\end{array}} \right]$ in the Theorem \ref{Th2.1}, then the linear system $Ax=b$ has the same corresponding solutions
as the linear system $\hat C{x^{(k)}_{J'}} = {b^{(k)}_{J'}}$, where
${\Box_{J'}}$ is defined as the subset of $\Box$ with the index coming from $J'$.
\end{corollary}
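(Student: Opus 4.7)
The plan is to reduce the statement to Theorem \ref{Th2.2} by a simultaneous row--column permutation that moves the submatrix $A_0$ into the leading $k \times k$ position. Let $P$ be the $n \times n$ permutation matrix that reorders rows so that those indexed by $I$ come first (in the order $i_1,\ldots,i_k$) and those indexed by $I'$ come last, and let $Q$ be the analogous column permutation sending the indices $J$ to positions $1,\ldots,k$ and the indices $J'$ to positions $k+1,\ldots,n$. Set $\tilde A = PAQ$, $\tilde b = Pb$, and substitute $x = Qy$.

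First I would observe that $Ax = b$ is equivalent to $\tilde A\, y = \tilde b$: $P$ is invertible so left-multiplication preserves the solution set, and $x = Qy$ is a bijective relabelling of the unknowns under which the last $n-k$ components $y_{k+1},\ldots,y_n$ correspond exactly to the components of $x$ indexed by $J'$, in the natural order of $J'$. By construction the top-left $k \times k$ block of $\tilde A$ equals $A_0$, which is invertible by hypothesis, so Theorem \ref{Th2.2} applies to $\tilde A\, y = \tilde b$ and produces an equivalent order-$(n-k)$ system $\tilde C\, y^{(k)} = \tilde b^{(k)}$.

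It then remains to identify $\tilde C$ with $\hat C$ and $\tilde b^{(k)}$ with $b^{(k)}_{J'}$ as defined in the corollary. The $(p,q)$-entry of $\tilde C$ obtained by applying \eqref{2.1} to $\tilde A$ is the determinant of $A_0$ bordered by the $p$-th row and $q$-th column of $\tilde A$; unfolding the permutations $P$ and $Q$, these are precisely the row indexed by $i'_{p-k}$ and the column indexed by $j'_{q-k}$ of the original $A$, so the resulting determinant matches $\hat c_{pq}$ in the statement once the outer indices are reinterpreted through $I'$ and $J'$. The analogous identification for $\tilde b^{(k)}$ is immediate from the formula for $b'_j$ in Theorem \ref{Th2.2}. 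The main obstacle I expect is purely notational bookkeeping: one has to verify that no stray sign is introduced by $P$ and $Q$ and that the bordering rows and columns inside each determinant are relabelled consistently. This is not serious, because a bordered determinant is invariant under any simultaneous renaming of its row and column indices, but the dictionary between the permuted indexing $(1,\ldots,n)$ and the original indexing via $I, I', J, J'$ must be written out with care for the identification to be airtight.
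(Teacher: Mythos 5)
Your proposal is correct and takes essentially the same route as the paper, which justifies the corollary only by the preceding remark that interchanging rows and columns of the system merely reorders the unknowns; your permutation matrices $P$ and $Q$ together with the reduction to Theorem \ref{Th2.2} simply make that one-line sketch precise.
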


According to the above theorem, one can easily see that though the condensation process removes information associated with discarded columns, we may obtain
certain variables values by controlling the elements in the set $J'$, see Example \ref{E4.1}.

In addition, the matrix mirroring and the negation of matrix mirroring process in \cite{ken2012} are also not necessary to arrive at the correct answer,
since we may obtain the similar parallel computing process by condensing the index set $J'$ from both sides (left and right), see Figure 1.
\[\left| {\begin{array}{*{20}{c}}
{{a_{11}}}&{{a_{12}}}&{{a_{13}}}\\
{{a_{21}}}&{{a_{22}}}&{{a_{23}}}\\
{{a_{31}}}&{{a_{32}}}&{{a_{33}}}
\end{array}} \right| \to {\mathop{\rm mirrored}\nolimits}  \to \left| {\begin{array}{*{20}{c}}
{{a_{13}}}&{{a_{12}}}&{ - {a_{11}}}\\
{{a_{23}}}&{{a_{22}}}&{ - {a_{21}}}\\
{{a_{33}}}&{{a_{32}}}&{ - {a_{31}}}
\end{array}} \right|.\]

\begin{figure}[htbp]\label{fig1}
\centerline{\includegraphics[width=6.20in,height=4.7in]{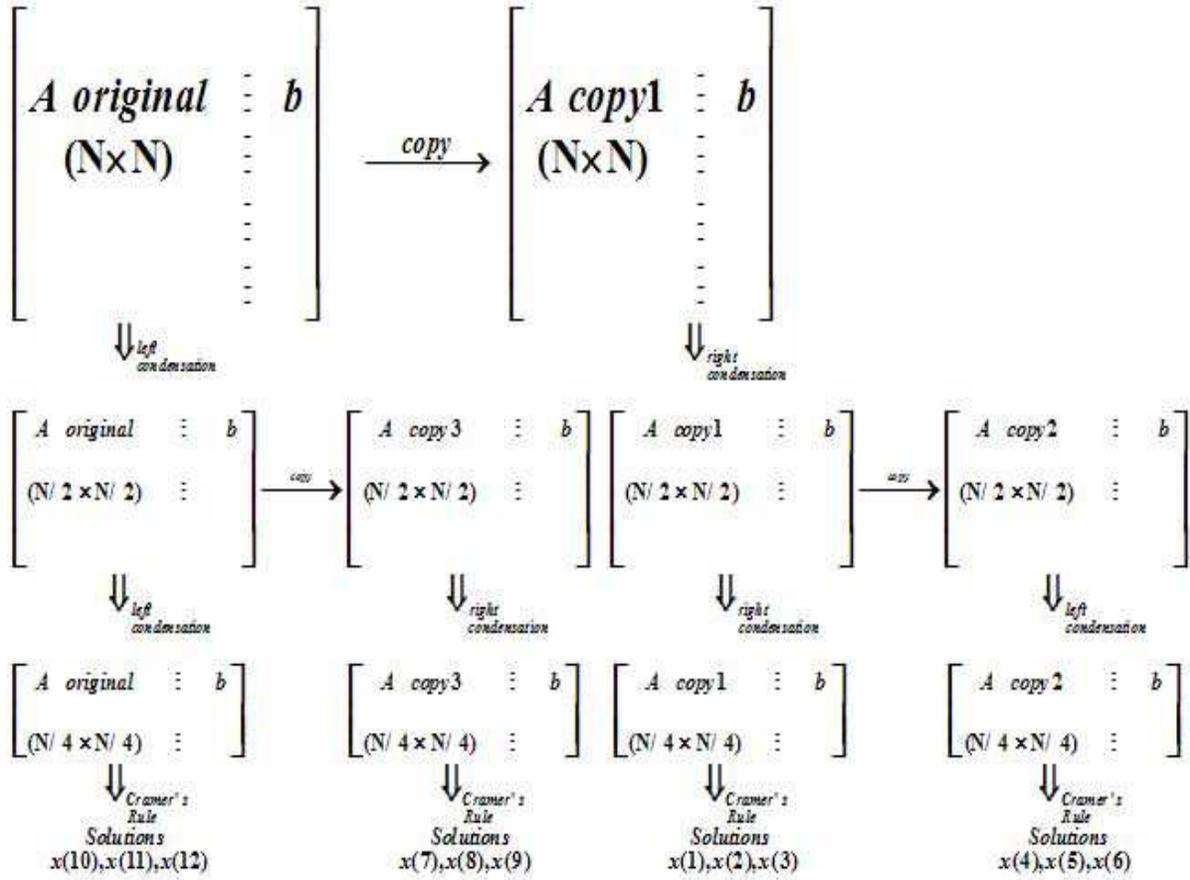}}
\caption{\small A process flow depicting the proposed framework.}
\end{figure}

Similar to \cite{ken2012}, copying occurs with the initial matrix and then each time a matrix is reduced
in half. An $N \times N$ matrix is copied when it reaches the size of $N\setminus 2 \times N\setminus 2$. Once the
matrix is copied, there is double the work. In other words, two $N\setminus 2 \times N\setminus 2$ matrices each
require a condensation. Obviously, the amount of
work for two matrices of half the size is much lower than that of one $N\times N$ matrix,
which avoids the $O(N^4)$ growth pattern in computations. This is due to the $O(N^3)$
nature of the condensation process (see \cite{ken2012}).

Similarly, one may consider a scenario
in which the algorithm creates more than two matrices during each copying step, according to Corollary \ref{Th3.4}.
On its computational complexity and more details, see \cite{ken2012}.

\section {A scheme suitable for parallel computing on the Sylvester's identity}

The Sylvester's identity \ref{Th2.1} reduces a matrix of order $n$ to order $n-k$ when evaluating its determinant.
Since when $k=1$, it is just the Chi\`{o}'s method. Therefore, for convenience, we call the Sylvester's identity a {\bf{K-Chi\`{o}'s method}} from now on.

As have been shown above, repeating the procedure numerous times can reduce a large matrix to a size convenient for the computations.
However, in order to condense a matrix from $N\times N$ to $(N-M)\times(N-M)$, the core
calculation is repeated $(N-M)^2$ times. Obviously, this is very expensive. In fact, we may parallel computing
each row of the matrix ${\hat C}$ in \eqref{2.1}, since $A_0$ or $A\left[ {\begin{array}{*{20}{c}}
{{i_1}, \ldots ,{i_t}}\\
{{j_1}, \ldots ,{j_t}}
\end{array}} \right]$ is common to each element of the row and may be calculated but once for each row via expanding by the last column.
For example, for the $p$-th row $\alpha _p$ of $\hat C$, we may write
\begin{equation}\label{3.30}
\begin{array}{l}
{\alpha _p} = \left[ {{{\hat c}_{p,k + 1}},{{\hat c}_{p,k + 2}}, \ldots ,{{\hat c}_{p,k + n}}} \right]\\
 = \left[ {\underbrace {A_0^{p,1},A_0^{p,2}, \cdots ,A_0^{p,k},|{A_0}|}_{k + 1}} \right] \cdot \left[ {\begin{array}{*{20}{c}}
{{a_{1,k + 1}}}&{{a_{1,k + 2}}}& \cdots &{{a_{1,n}}}\\
{{a_{2,k + 1}}}&{{a_{2,k + 2}}}& \cdots &{{a_{2,n}}}\\
 \cdots & \cdots & \cdots & \cdots \\
{{a_{p,k + 1}}}&{{a_{p,k + 2}}}& \cdots &{{a_{p,n}}}
\end{array}} \right],
\end{array}
\end{equation}
where
\[A_0^{p,\mathbf{j}} =  - \left| {\begin{array}{*{20}{c}}
{{a_{11}}}&{{a_{12}}}& \cdots &{{a_{1k}}}\\
\begin{array}{l}
{a_{21}}\\
 \cdots
\end{array}&\begin{array}{l}
{a_{22}}\\
 \cdots
\end{array}&\begin{array}{l}
 \cdots \\
 \cdots
\end{array}&\begin{array}{l}
{a_{2k}}\\
 \cdots
\end{array}\\
\begin{array}{l}
{{ a_{\textbf p1}}}\\
 \cdots
\end{array}&\begin{array}{l}
{{ a_{\textbf p2}}}\\
 \cdots
\end{array}&\begin{array}{l}
 \cdots \\
 \cdots
\end{array}&\begin{array}{l}
{{ a_{\textbf pk}}}\\
 \cdots
\end{array}\\
{{a_{k1}}}&{{a_{k2}}}& \cdots &{{a_{kk}}}
\end{array}} \right|\begin{array}{*{20}{c}}
{}\\
{}\\
{}\\
\mathbf{j}\\
{}\\
{}
\end{array}.\]
Therefore, only $k$ determents $A_0^{p,\mathbf{j}}$ ($j=1,\ldots,k$) and a common $k\times k$ determent $\det A_0$ are needed
for each row of the matrix ${\hat C}$. Therefore, the matrix $\hat C$ is essentially suitable for parallel computations since the each row of matrix $\hat C$ may be
independently computed by \eqref{3.30}. See Example \ref{E3.3} below.

\begin{example}\label{E3.3} Consider the following four order determent
\[\left| A \right| = \left| {\begin{array}{*{20}{c}}
1&{ - 2}&3&1\\
4&2&{ - 1}&0\\
\mathbf{0}&\mathbf{2}&{1}&{5}\\
\mathbf{-3}&\mathbf{3}&1&2
\end{array}} \right|.\]

Let ${A_0} = \left[ {\begin{array}{*{20}{c}}
1&{ - 2}\\
4&2
\end{array}} \right]$, then $|A_0|=\mathbf{10}$, and
\begin{equation*}
\begin{array}{l}
{\alpha _3} = \left[ -{\left| {\begin{array}{*{20}{c}}
\mathbf{0}&\mathbf{2}\\
4&2
\end{array}} \right|,-\left| {\begin{array}{*{20}{c}}
1&-2\\
\mathbf{0}&\mathbf{2}
\end{array}} \right|,\mathbf{10}} \right]\left[ {\begin{array}{*{20}{c}}
3&1\\
{ - 1}&0\\
1&5
\end{array}} \right] = \left[ {\begin{array}{*{20}{c}}
{36}&{58}
\end{array}} \right];\\
{\alpha _4} = \left[ {-\left| {\begin{array}{*{20}{c}}
\mathbf{-3}&\mathbf{3}\\
4&2
\end{array}} \right|,-\left| {\begin{array}{*{20}{c}}
1&{ - 2}\\
\mathbf{-3}&\mathbf{3}
\end{array}} \right|,\mathbf{10}} \right]\left[ {\begin{array}{*{20}{c}}
3&1\\
{ - 1}&0\\
1&2
\end{array}} \right] = \left[ {\begin{array}{*{20}{c}}
{61}&{38}
\end{array}} \right].
\end{array}
\end{equation*}

Therefore,
\[\left| A \right| = \frac{1}{{{{\mathbf{10}}^{4 - 3}}}}\left| {\begin{array}{*{20}{c}}
{36}&{58}\\
{61}&{38}
\end{array}} \right| =  - 217.\]
\end{example}

From here, we note that only six $2\times 2$ determinants is needed. However, Chi\`{o}'s method will require fourteen $2\times 2$ determinants to be computed.
In addition, comparing with the Gaussian elimination, our method increases only two multiplications. But Gaussian elimination method is not too suitable for parallel computing. Thus, the whole computational amount on the matrix ${\hat C}$ will be much less than that
involved in the old process of computation \cite{ken2012}. Concretely speaking, if we denote the total of multiplications/diversions on the $k$-order determinant $|A_0|$ by $m$, then
the total of multiplications/diversions by using the K-Chi\`{o}'s method \eqref{2.2} is about
\[\begin{array}{l}
(k + 1)\left[ {{k^2} + {{(2k)}^2} +  \ldots  + {{(n - k)}^2}} \right] + km\left[ {k + 2k +  \ldots  + (n - k)} \right] + \frac{{n - k}}{k}(m + 1) + m\\
 \approx O\left( {\frac{1}{3}(1 + \frac{1}{k}){n^3}} \right).
\end{array}
\]
Similarly, the computational complexity of other algorithms is also described as follows, see Table 1.

\begin{table}[h]
\label{t1}
\caption{\small Comparisons of the computational complexity for different algorithms on determinant calculations in the nonparallel setting.}
   \leavevmode
    \begin{tabular}{l|c|c} \hline
Algorithms &Multiplications/divisions & Additions/subtractions \\
\hline
Gaussian Elimination (\cite{Golub1996})&$O\left( {\frac{1}{3}{n^3}} \right)$& $O\left( {\frac{1}{3}{n^3}} \right)$ \\
Chi\`{o}'s condensation method (\cite{A1944})&$O\left( {\frac{2}{3}{n^3}} \right)$& $O\left( {\frac{1}{3}{n^3}} \right)$ \\
K-Chi\`{o}'s condensation method(\cite{LAA2014})& $O\left( {\frac{1}{3}(1 + \frac{1}{k}){n^3}} \right)$& $O\left( {\frac{1}{3}{n^3}} \right)$ \\
\hline
    \end{tabular}
\end{table}
From Table 1, we note that additions/subtractions on these algorithms are almost the same.
However, multiplications/diversions mainly depend on the parameter $k$ for K-Chi\`{o}'s condensation method.
But this does not show that the total computational complexity on K-Chi\`{o}'s method \eqref{2.2} is tending to decrease with the $k$ increasing,
since the core loop of the K-Chi\`{o}'s condensation method involves the calculation of $k\times k$ determinants for each element of the matrix during condensation.
Normally, this would necessitate the standard computational workload to calculate the $k$-order determinant, i.e., ${\frac{1}{3}{k^3}}$ multiplications/divisions and
${\frac{1}{3}{k^3}}$ additions/subtractions, using a method such as Gaussian elimination \cite{ken2012}. Therefore, the parameter $k$ is not
the better for the bigger number, see the following experimental results Figure 1 and 2 on the $5000$-order and $20000$-order determinants, respectively.
The small subgraphs in Fig. 1 and 2 show the optimal parameter $k$ value ranges. For example, the optimal parameter $k$ is approximately ten for a $20000$-order determinant.
In addition, for matrices of different dimensions, we specifically compute the optimal parameters $k$, we find the optimal parameter values increasing as the matrix dimension increases.
But this increase is still relatively slow, see Fig. 3.

\begin{figure}[htbp]\label{fig1}
\centerline{\includegraphics[width=7.40in,height=6.92in]{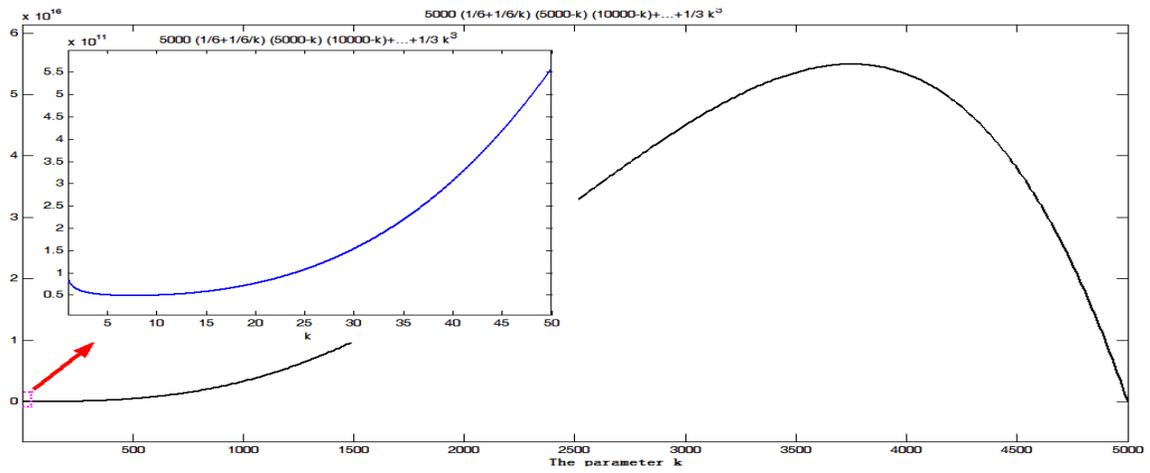}}
\caption{\small The relationship between the parameter $k$ and the number of multiplication/diversion for a 5000-order determinant on K-Chi\`{o}'s condensation method.}
\end{figure}

\begin{figure}[htbp]\label{fig2}
\centerline{\includegraphics[width=7.80in,height=6.92in]{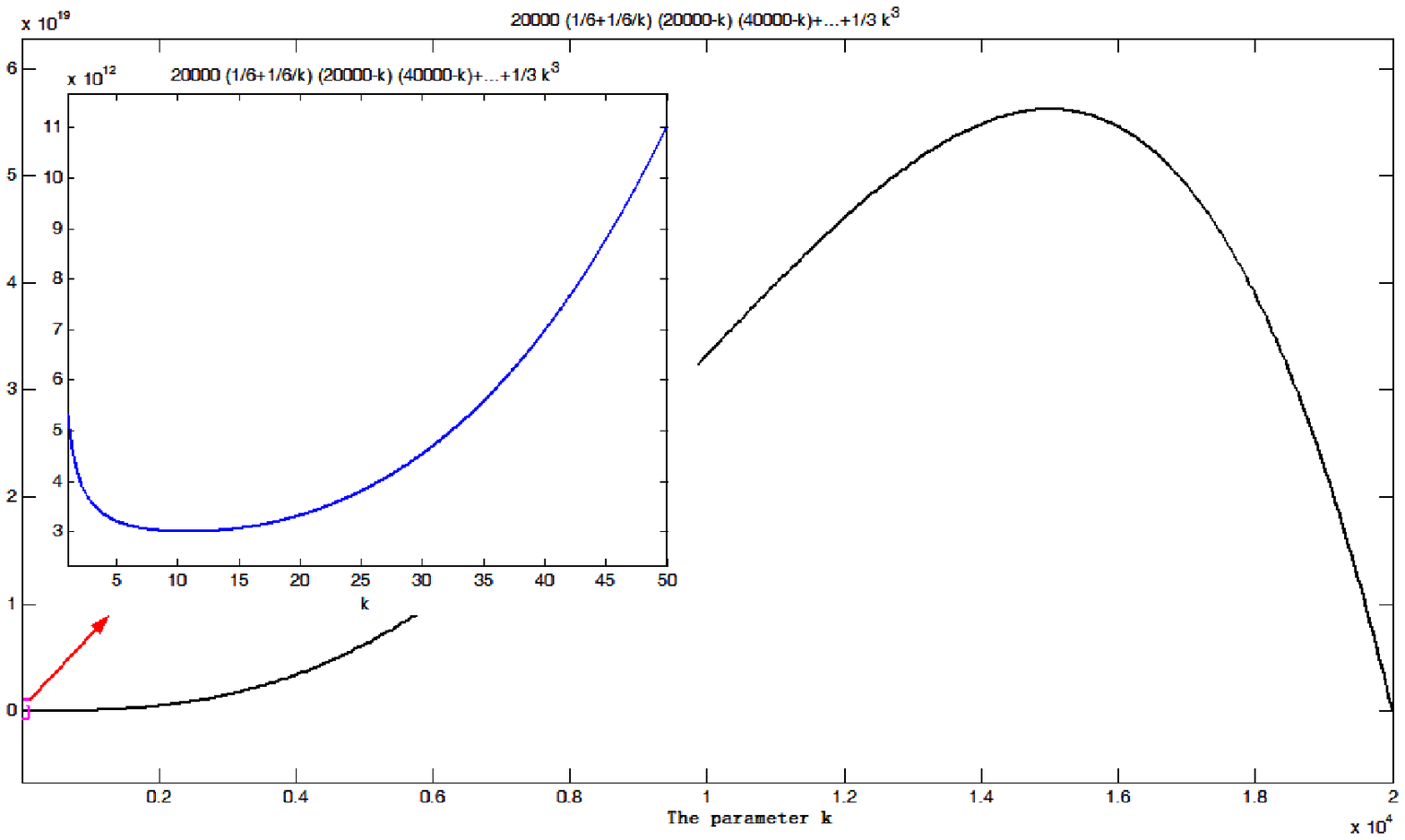}}
\caption{\small The relationship between the parameter $k$ and the number of multiplication/diversion for a 20000-order determinant on  K-Chi\`{o}'s condensation method.}
\end{figure}

\begin{figure}[htbp]\label{fig3}
\centerline{\includegraphics[width=8.00in,height=8.00in]{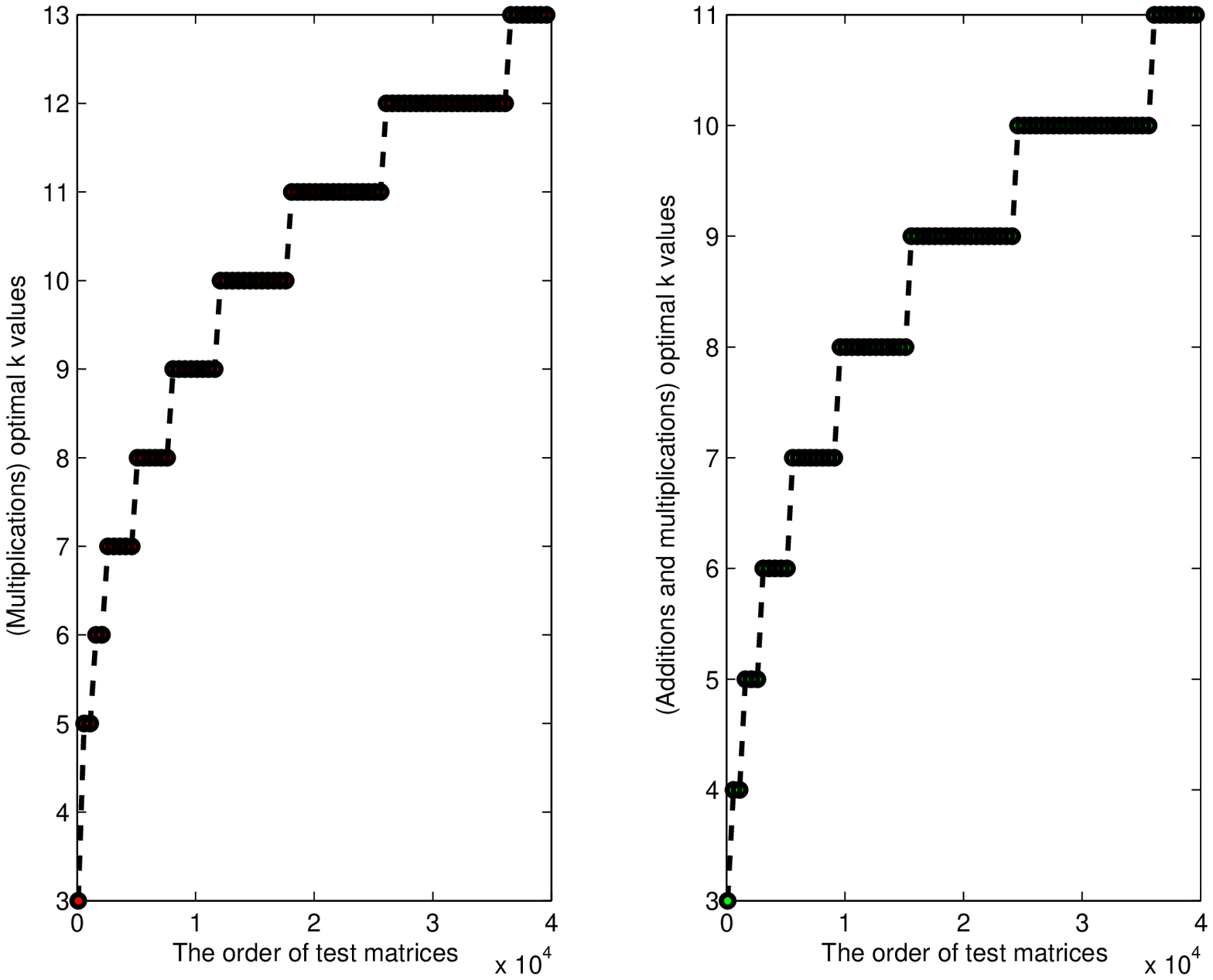}}
\caption{\small The relationship between the optimal parameter $k$ and the dimension of matrices, Left: only consider multiplications; Right: consider all computational complexity.}
\end{figure}

Since the optimal parameter $k$ is usually small, by \eqref{3.30}, we may normalize the each row of matrix ${\hat C}$ by dividing the determinant of $A_0$,
which will further reduce the computational complexity of K-Chi\`{o}'s condensation method, see Example \ref{E4.1}.

\section {An application in the Cramer's rule}

As is well-known, the classical Cramer's rule states that the components of the solution to a linear system in the form
$Ax=b$ (where $A=(a_{ij})$ is an $n\times n$ invertible coefficient matrix) are given by
\begin{equation}\label{3.20}
x_i=\mathrm{det}(A_i(b))/\mathrm{det}(A),\; i=1,2,\ldots,n,
\end{equation}
where $x_i$ is the $i$th unknown.

In \cite{ken2012}, an algorithm based on Chi\`{o}'s condensation and {C}ramer's rule for
solving large-scale linear systems is achieved by constructing a binary, tree-based data flow in which the algorithm mirrors the matrix
at critical points during the condensation process. However, according to the above corollary \ref{Th3.4}, one may obtain certain unknowns values by \textbf{freely controlling the elements in the set} $J'$
without matrix mirroring, see Example \ref{E4.1}. This also makes it more easily for more CPUs to be used in computing process and even
without any communication. At the same time, the scheme \eqref{3.30} also reduce the memory space.

\begin{example}\label{E4.1} Solve the following six-order linear system
\begin{equation}\label{14}
\left[ {\begin{array}{*{20}{c}}
1&3&5&7&9&{11}\\
2&0&0&0&0&9\\
3&0&5&7&0&7\\
4&0&6&8&0&5\\
5&0&0&0&0&3\\
6&5&4&3&2&1
\end{array}} \right]\left[ {\begin{array}{*{20}{c}}
{{x_1}}\\
{{x_2}}\\
{{x_3}}\\
{{x_4}}\\
{{x_5}}\\
{{x_6}}
\end{array}} \right] = \left[ {\begin{array}{*{20}{c}}
\textcolor[rgb]{0,0,1}{1}\\
\textcolor[rgb]{0,0,1}{-1}\\
\textcolor[rgb]{0,0,1}{1}\\
\textcolor[rgb]{0,0,1}{ - 1}\\
\textcolor[rgb]{0,0,1}1\\
\textcolor[rgb]{0,0,1}{ - 1}
\end{array}} \right].
\end{equation}

Let ${I_1} = (1,2,3)$ and ${J_1} = (1,3,5)$, then $J'_1 = (2,4,6)$. Denote
\[|{\mathbf{A_0}}| = \left| {A\left[ {\begin{array}{*{20}{c}}
{{I_1}}\\
{{J_1}}
\end{array}} \right]} \right| = \left| {\begin{array}{*{20}{c}}
1&5&9\\
2&0&0\\
3&5&0
\end{array}} \right|.\]
Then, $|{\mathbf{A_0}}| =90$ and
\[\begin{array}{lll}
{\alpha _4}  &=& \left[ {\begin{array}{*{20}{c}}
{ - \left| {\begin{array}{*{20}{c}}
\mathbf{4}&\mathbf{6}&\mathbf{0}\\
2&0&0\\
3&5&0
\end{array}} \right|,}&{ - \left| {\begin{array}{*{20}{c}}
1&5&9\\
\mathbf{4}&\mathbf{6}&\mathbf{0}\\
3&5&0
\end{array}} \right|,}&{ - \left| {\begin{array}{*{20}{c}}
1&5&9\\
2&0&0\\
\mathbf{4}&\mathbf{6}&\mathbf{0}
\end{array}} \right|,}&{\mathbf{\left| {{A_0}} \right|}}
\end{array}} \right]\left[ {\begin{array}{*{20}{c}}
3&7&{11}&\textcolor[rgb]{0,0,1}{1}\\
0&0&9&\textcolor[rgb]{0,0,1}{ - 1}\\
0&7&7&\textcolor[rgb]{0,0,1}{1}\\
\mathbf{0}&\mathbf{8}&\mathbf{5}&{ \textcolor[rgb]{0,0,1} {\textbf{- 1}}}
\end{array}} \right]\\
 &=& \left[ {\begin{array}{*{20}{c}}
{0,}&{-36,}&{-468,}&\textcolor[rgb]{0,0,1}{-180}
\end{array}} \right];
\end{array}\]

\[
\begin{array}{lll}
{\alpha _5} &=& \left[ {\begin{array}{*{20}{c}}
{ - \left| {\begin{array}{*{20}{c}}
\mathbf{5}&\mathbf{0}&\mathbf{0}\\
2&0&0\\
3&5&0
\end{array}} \right|,}&{ - \left| {\begin{array}{*{20}{c}}
1&5&9\\
\mathbf{5}&\mathbf{0}&\mathbf{0}\\
3&5&0
\end{array}} \right|,}&{ - \left| {\begin{array}{*{20}{c}}
1&5&9\\
2&0&0\\
\mathbf{5}&\mathbf{0}&\mathbf{0}
\end{array}} \right|,}&{\left| {{\mathbf{A_0}}} \right|}
\end{array}} \right]\left[ {\begin{array}{*{20}{c}}
3&7&{11}&\textcolor[rgb]{0,0,1}{1}\\
0&0&9&\textcolor[rgb]{0,0,1}{ - 1}\\
0&7&7&\textcolor[rgb]{0,0,1}{1}\\
\mathbf{0}&\mathbf{0}&\mathbf{3}&\textcolor[rgb]{0,0,1}{\textbf{1}}
\end{array}} \right]\\
 &=& \left[ {\begin{array}{*{20}{c}}
{0,}&{0,}&{-1755,}&\textcolor[rgb]{0,0,1}{315}
\end{array}} \right];
\end{array}
\]

\[\begin{array}{lll}
{\alpha _6} &=& \left[ {\begin{array}{*{20}{c}}
{ - \left| {\begin{array}{*{20}{c}}
\mathbf{6}&\mathbf{4}&\mathbf{2}\\
2&0&0\\
3&5&0
\end{array}} \right|,}&{ - \left| {\begin{array}{*{20}{c}}
1&5&9\\
\mathbf{6}&\mathbf{4}&\mathbf{2}\\
3&5&0
\end{array}} \right|,}&{ - \left| {\begin{array}{*{20}{c}}
1&5&9\\
2&0&0\\
\mathbf{6}&\mathbf{4}&\mathbf{2}
\end{array}} \right|,}&{\left| {{\mathbf{A_0}}} \right|}
\end{array}} \right]\left[ {\begin{array}{*{20}{c}}
3&7&{11}&\textcolor[rgb]{0,0,1}{1}\\
0&0&9&\textcolor[rgb]{0,0,1}{ - 1}\\
0&7&7&\textcolor[rgb]{0,0,1}{1}\\
\mathbf{5}&\mathbf{3}&\mathbf{1}&{\textcolor[rgb]{0,0,1}{ \textbf{- 1}}}
\end{array}} \right]\\
 &=& \left[ {\begin{array}{*{20}{c}}
{390,}&{-234,}&{-2132,}&\textcolor[rgb]{0,0,1}{20}
\end{array}} \right].
\end{array}\]
Therefore, we need only solve the condensed linear system $\hat C{x^{(k)}_{J'}} = {b^{(k)}_{J'}}$, i.e.,

\begin{equation}\label{15}
\left[ {\begin{array}{*{20}{c}}
0&{ - 36}&{ - 468}\\
0&0&{ - 1755}\\
{390}&{ - 234}&{ - 2132}
\end{array}} \right]\left[ {\begin{array}{*{20}{c}}
{{x_2}}\\
{{x_4}}\\
{{x_6}}
\end{array}} \right] = \left[ {\begin{array}{*{20}{c}}
\textcolor[rgb]{0,0,1}{-180}\\
\textcolor[rgb]{0,0,1}{315}\\
\textcolor[rgb]{0,0,1}{20}
\end{array}} \right].
\end{equation}

By Cramer's rule or Gaussian elimination, the solution of above sub-linear system \eqref{15} is
\[{x_2} = 406/117,\;{x_4} = 22/3,\;{x_6} =  - 7/39,\]
which is also the corresponding solution of original linear system \eqref{14}. Similarly, let ${I_2} = (1,2,3)$ and ${J_2} = (2,4,6)$,
then we may also obtain the solution of the unknown $x_1$, $x_2$ and $x_3$:
\[{x_1} = 4/13,\;{x_3} = -10,\;{x_5} =  - 118/117.\]

In addition, we may continue condense the above $\alpha _4$, $\alpha _5$ and $\alpha _6$. For example, we condense them from right side for
$J'_1 = (2,4,6)$. Without loss of generality, we may let $I_3=(6)$, $J_3=(6)$, then $J'_3 = (2,4)$ and we have
\begin{equation}\label{16}
\begin{array}{l}
\alpha _4' = \left[ {468, - 2132} \right]\left[ {\begin{array}{*{20}{c}}
{390}&{ - 234}&\textcolor[rgb]{0,0,1}{20}\\
0&{ - 36}&\textcolor[rgb]{0,0,1}{-180}
\end{array}} \right] = \left[ {182520, - 32760,\textcolor[rgb]{0,0,1}{393120}} \right];\\
\alpha _5' = \left[ {1755, - 2132} \right]\left[ {\begin{array}{*{20}{c}}
{390}&{ - 234}&\textcolor[rgb]{0,0,1}{20}\\
0&0&\textcolor[rgb]{0,0,1}{315}
\end{array}} \right] = \left[ {684450, - 410670,\textcolor[rgb]{0,0,1}{- 636480}} \right].
\end{array}
\end{equation}
By Gaussian elimination, we obtain the solution of the above linear system \eqref{16}:
\[{x_2} = 406/117,\;{x_4} = 22/3.\]

Moreover, to further reduce the computational complexity of K-Chi\`{o}'s condensation method, we may normalize the each row of
matrix ${\hat C}$ by dividing the determinant of $A_0$. For example, the above $\alpha _4'$ and $\alpha _5'$ may be written as
\[\begin{array}{l}
\alpha _4' = \left[ {468/-2132,\textbf{1}} \right]\left[ {\begin{array}{*{20}{c}}
{390}&{-234}&\textcolor[rgb]{0,0,1}{20}\\
0&{ - 36}&\textcolor[rgb]{0,0,1}{ - 180}
\end{array}} \right] = \left[ {-3510/41,630/41, \textcolor[rgb]{0,0,1}{-7560/41}} \right];\\
\alpha _5' = \left[ {1755/ - 2132,\textbf{1}} \right]\left[ {\begin{array}{*{20}{c}}
{390}&{-234}&\textcolor[rgb]{0,0,1}{20}\\
0&0&\textcolor[rgb]{0,0,1}{315}
\end{array}} \right] = \left[ {-26325/82,15795/82,\textcolor[rgb]{0,0,1}{12240/41}} \right].
\end{array}\]

\end{example}

From the above example, we know that applying Gaussian elimination method instead of Cramer's rule to solve the small sub-linear system $\hat C{x^{(k)}_{J'}} = {b^{(k)}_{J'}}$
is also very convenient.

\section {Concluding remarks}

From the above discussion, one can see that unique utilization of matrix condensation techniques yields an elegant process that has promise for
parallel computing architectures. Moreover, as was also mentioned in \cite{ken2012}, these condensation methods become extremely interesting,
since they still retain an $O\left(n^3\right)$ complexity with pragmatic forward and backward stability properties when they are applied to solve
large-scale linear systems by the Cramer's rule or Gaussian elimination.

In this paper, some condensation methods are introduced and some existing problems on these techniques are also
discussed. Though the condensation process removes information associated with discarded columns, this makes the computation of linear systems become
feasible by more freely parallel process.

\textbf{Acknowledgements.} \emph{Partial results of this paper were completed while the first author was visiting the College of William and Mary in 2013. The first author is very grateful to Professor Chi-Kwong Li for the invitation to the College of William and Mary.}



\end{document}